\def\@cite#1#2{{\m@th\upshape\bfseries%
[{#1\if@tempswa{\m@th\upshape\mdseries, #2}\fi}]}}
\newtheorem{theorem*}{Theorem}
\newtheorem{theorem}{Theorem}[section]
\newtheorem{corollary}[theorem]{Corollary}
\newtheorem{proposition}[theorem]{Proposition}
\theoremstyle{definition}
\newtheorem{remark}[theorem]{Remark}
\numberwithin{equation}{section}
  \newcommand{\A}{{\mathcal{A}}}
  \newcommand{\B}{{\mathcal{B}}}
  \newcommand{\E}{{\mathcal{E}}}
  \newcommand{\F}{{\mathcal{F}}}
  \newcommand{\N}{{\mathcal{N}}}
\renewcommand{\phi}{\varphi}
\newcommand{\upchi}{{\raise.35ex\hbox{\ensuremath{\chi}}}}
\newcommand{\ad}{\operatorname{Ad}}
\newcommand{\id}{{\operatorname{id}}}
\newcommand{\m}{\operatorname{m}}
\newcommand{\li}{\operatorname{-lim}}
\begin{document}

\title[]{Near inclusions of amenable operator algebras}

\author[J. Roydor]{Jean Roydor}

\subjclass[2000]{47L55, 46L07}
\keywords{}
\thanks{The author is supported by JSPS}

\maketitle

\begin{abstract}
We prove that if an amenable operator algebra is nearly contained in a complemented dual operator algebra, then it can be embedded inside this dual operator algebra via a similarity. The proof relies on a B.E. Johnson Theorem on approximately multiplicative maps.
\end{abstract}

\maketitle

\section{Introduction}

In \cite{KK}, R.V. Kadison and D. Kastler defined a metric on the collection of all subspaces of the bounded operators on a fixed Hilbert space. They conjectured that sufficiently close von Neumann algebras (or $C^*$-algebras) are necessarily unitarily conjugate. A great amount of work around this perturbation problem has been achieved in the last forty years (see e.g. \cite{C0}, \cite{C1}). Very recently, Kadison-Kastler's conjecture has been proved for the class of separable nuclear $C^*$-algebras in the remarkable paper \cite{C5} (see also \cite{C4}). One-sided versions of Kadison-Kastler's conjecture have been considered as well ; in \cite{C2}, E. Christensen proved that a nuclear $C^*$-algebra which is nearly contained in an injective von Neumann algebra can be unitarily conjugated inside this von Neumann algebra. In this short note, we prove an analog of this near inclusion result for non-selfadjoint operator algebras.\\
 \indent The emergence of operator space theory, in the late eighties, gave a nice framework to study non-selfadjoint operator algebras (see \cite{BLM}, \cite{ER1}, \cite{Pa} and \cite{P}). An important conjecture in this area has been raised by G. Pisier: a non-selfadjoint operator algebra which is amenable (as a Banach algebra) should be similar to a nuclear $C^*$-algebra (see \cite{BLM} Section 7.4 for more details). With this conjecture in mind, one can expect that a version of E. Christensen's aforementioned result must be true with an amenable operator algebra. More precisely, an amenable operator algebra which is nearly contained in an injective von Neumann algebra should embed inside this von Neumann algebra by a similarity. Actually, we prove slightly more (see Theorem \ref{T:jr1.5} below), because we do not assume the `container algebra' to be selfadjoint and we just need a bounded projection on it (not necessarily contractive).\\
  \indent The difficulty is that non-selfadjoint algebras are much more flexible than selfadjoint ones and tools to study them are rather limited (no order structure, no continuous functional calculus, and no abundance of projections or Borel functional calculus in the dual case). For instance, one very important step in the resolution of perturbation problems (referred as the second step in the introduction of \cite{C2}) is to perturb a linear isomorphism close to the identity map into a $*$-homomorphism (see Lemma 3.3 in \cite{C1} or Lemma 3.2 in \cite{C5}). E. Christensen's averaging trick is not available in our present case ; instead we will use a Theorem of B.E. Johnson on approximately multiplicative maps, this is the main new ingredient. We will also need to prove a non-selfadjoint analog of Theorem 5.4 in \cite{C1} on neighboring representations of amenable operator algebras (see the third step mentioned in the introduction of \cite{C2}).\\

\section{Preliminaries}\label{Pre}

\subsection{Virtual diagonal} In \cite{J0}, B.E. Johnson defined the notion of amenability for Banach algebras in cohomological terms. Subsequently, he proved that a Banach algebra is amenable if and only if it admits a virtual diagonal. Let us recall this notion of virtual diagonal.\\
Let $\A$ be a unital Banach algebra. The projective tensor product $\A \widehat{\otimes} \A$ can be equipped with a Banach $\A$-bimodule structure, for any $a,b,x,y \in \A$:
$$a \cdot (x \otimes y)\cdot b= ax \otimes yb.$$
Thus, the bidual space $(\A \widehat{\otimes} \A)^{**}$ can be turned canonically into a Banach $\A$-bimodule as well (by duality).
A \textit{virtual diagonal for $\A$} is an element $u \in (\A \widehat{\otimes} \A)^{**}$ satisfying:
\begin{enumerate}
\item for any $a \in \A$, $a \cdot u= u \cdot a$,
\item $\m_\A ^{**}(u)=1$ (where $\m_\A:\A \widehat{\otimes} \A \to \A$ denotes the multiplication map).
\end{enumerate}

\subsection{Approximately multiplicative maps} Let us recall a Theorem of B.E. Johnson on approximately multiplicative maps, which will be a crucial ingredient in the proof of the main result. In \cite{J}, B.E. Johnson proved that an approximately multiplicative map defined on an amenable Banach algebra is close to an actual algebra homomorphism. His result is the Banach algebraic version of an earlier result due to D. Kazhdan for amenable groups (see \cite{K}).
If $L$ is a linear map between Banach algebras $\A$ and $\B$, we denote $L^{\vee}:\A \times \A \to \B$ the bilinear map defined by \begin{equation}\label{aeq}L^{\vee}(x,y)=L(xy)-L(x)L(y).\end{equation} This enables us to measure the defect of multiplicativity of $L$.\\
 For the following Theorem, we recall that a Banach algebra $\B$ is called a \textit{dual Banach algebra} if there is a sub-$\B$-module $\B_*$ of $\B^*$ such that $\B=(\B_*)^*$. Note that an operator algebra which is a dual Banach algebra in this sense is actually a dual operator algebra in the sense of Section 2.7 in \cite{BLM} (i.e. a $w^*$-closed subalgebra of a certain $\mathbb{B}(H)$, the von Neumann algebra of all bounded operators on a Hilbert space $H$).

\begin{theorem}[(\cite{J}, Th. 3.1)]\label{T:j} Let $\A$ be a unital amenable Banach algebra and suppose that $\B$ is a dual Banach algebra. Then, for any $\varepsilon \in ]0,1[$, for any $\mu >0$, there exists $\delta >0$ such that:\\
 for every unital bounded linear map $L:\A \to \B$ satisfying $\Vert L \Vert \leq \mu$ and $\Vert L^{\vee} \Vert \leq \delta$, there is a unital bounded algebra homomorphism $\pi:\A \to \B$ such that $\Vert L - \pi \Vert \leq \varepsilon$.
\end{theorem}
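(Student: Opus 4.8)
The statement to prove is B.E. Johnson's Theorem 3.1 from \cite{J}, so the proof should follow Johnson's strategy: use the virtual diagonal of the amenable algebra $\A$ together with the $w^*$-compactness coming from the dual Banach algebra structure of $\B$ to average the approximately multiplicative map $L$ into a genuine homomorphism. First I would fix a virtual diagonal $u \in (\A \widehat{\otimes} \A)^{**}$ for $\A$ (which exists by Johnson's characterization of amenability recalled in Section \ref{Pre}), with, say, $\Vert u \Vert \leq M$. One may also normalize so that $\m_\A^{**}(u) = 1$ and $a \cdot u = u \cdot a$ for all $a \in \A$. The idea is that a genuine homomorphism $\pi$ extending the "values" of $L$ should be obtainable by a formula morally of the shape $\pi(a) = \text{``}\sum L(ax_i) L(y_i)\text{''}$ where $u = \text{``}\sum x_i \otimes y_i\text{''}$, the sum being interpreted via the bidual pairing and $w^*$-limits in $\B = (\B_*)^*$.

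**Key steps.** (1) Construct the candidate map. Using the linear map $L : \A \to \B$, form the bounded bilinear map $(x,y) \mapsto L(ax)L(y)$ for each fixed $a \in \A$; this induces a bounded linear functional on $\A \widehat{\otimes} \A$ valued in $\B$, and pairing with $u \in (\A \widehat{\otimes}\A)^{**}$ — legitimate because $\B$ is a dual space, so weak-$*$ integration/pairing against the second dual makes sense coordinate-wise against $\B_*$ — yields an element $\pi(a) \in \B$. Here the hypothesis $\Vert L \Vert \leq \mu$ controls $\Vert \pi \Vert \leq \mu^2 M$. (2) Estimate $\Vert L - \pi \Vert$. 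Writing $L(a) = L(a) \cdot \m_\A^{**}(u) = \text{``}\sum L(a) L(x_i y_i)\text{''}$ (using property (2) of the virtual diagonal and linearity), the difference $L(a) - \pi(a)$ unfolds into an expression each of whose terms is a value of $L^\vee$ — specifically one meets terms like $L(a)L(x_iy_i) - L(ax_i)L(y_i)$, which telescope through $L^\vee(a, x_iy_i)$, $L^\vee(ax_i, y_i)$ and $L^\vee(a,x_i)L(y_i)$ — so $\Vert L - \pi \Vert$ is bounded by a constant (depending only on $\mu$ and $M$, hence ultimately only on $\mu$ and $\varepsilon$ through the choice of $u$) times $\Vert L^\vee \Vert \leq \delta$. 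Choosing $\delta$ small enough forces $\Vert L - \pi \Vert \leq \varepsilon$. (3) Verify $\pi$ is multiplicative. Here one exploits the bimodule identity $a \cdot u = u \cdot a$: for $a, b \in \A$ the two expressions $\pi(ab)$ and $\pi(a)\pi(b)$, when expanded against $u$, differ only by terms involving $L^\vee$ again, so $\Vert \pi^\vee \Vert$ is itself $O(\delta)$. To upgrade "approximately multiplicative with small defect" to "exactly multiplicative" one iterates: $\pi$ is a new unital approximately multiplicative map with strictly smaller defect $\Vert \pi^\vee \Vert \leq C\delta$, apply the construction again to get $\pi_1$, etc., producing a Cauchy sequence $\pi = \pi_0, \pi_1, \pi_2, \dots$ in $\B^\A$ with $\Vert \pi_{n+1} - \pi_n \Vert \to 0$ geometrically and $\Vert \pi_n^\vee \Vert \to 0$; the limit $\pi_\infty$ is a genuine unital bounded homomorphism, and by summing the geometric tail one keeps $\Vert L - \pi_\infty \Vert \leq \varepsilon$ provided $\delta$ was chosen small. (4) Unitality: $\pi(1)$ equals $\m_\A^{**}(u)$-type expression in $L(1) = 1$, giving $\pi(1) = 1$ directly, or is corrected in the limit.

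**Main obstacle.** The delicate point is step (1)–(3): making rigorous the manipulation "pair a $\B$-valued bounded bilinear form against $u \in (\A\widehat\otimes\A)^{**}$ and expand." One must check that the relevant maps $\A \widehat\otimes \A \to \B$ are weak-$*$ continuous into $(\B_*)^*$ in the appropriate variable so that the bidual pairing is well-defined and behaves linearly, and that identities valid for elementary tensors $x \otimes y$ pass to $u$ by weak-$*$ density/continuity — this is exactly where the dual Banach algebra hypothesis on $\B$ is used and cannot be dropped. The combinatorial bookkeeping in step (2), tracking which differences are genuine $L^\vee$-terms and bounding the number of them by an absolute constant, is routine but must be done carefully to extract an explicit $\delta = \delta(\varepsilon, \mu)$. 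Finally, the iteration in step (3) requires that the constant $C$ in $\Vert \pi_{n+1}^\vee \Vert \leq C \Vert \pi_n^\vee\Vert^{?}$ — in fact one gets a quadratic-type improvement $\Vert \pi^\vee \Vert \lesssim \Vert L^\vee \Vert^2$ or at least a contraction after rescaling — actually produces convergence; arranging this, and controlling the norms $\Vert \pi_n \Vert$ uniformly along the iteration, is the technical heart of Johnson's argument.
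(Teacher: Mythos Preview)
The paper does not actually prove this theorem: it is quoted verbatim from Johnson \cite{J}, Theorem~3.1, and the only additional content the paper supplies is the explicit choice of $\delta$ recorded in equation~(\ref{b3eq}), namely $\delta = \varepsilon / (4\Vert u\Vert + 8\mu^2\Vert u\Vert^2)$. So there is no ``paper's own proof'' to compare against beyond that single quantitative remark.

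That said, your sketch is a faithful outline of Johnson's original argument: build a corrected map by pairing $L$-expressions against the virtual diagonal $u$ (using the dual-algebra structure of $\B$ to make the $w^*$-pairing meaningful), estimate the correction in terms of $\Vert L^\vee\Vert$, observe the new map has strictly smaller multiplicative defect, and iterate to a genuine homomorphism. Two small points worth tightening. First, your candidate $\pi(a) = \text{``}\sum L(ax_i)L(y_i)\text{''}$ already satisfies $\Vert L-\pi\Vert \le \Vert u\Vert\,\Vert L^\vee\Vert$ directly (write $L(a)=\text{``}\sum L(ax_iy_i)\text{''}$ via $\m_\A^{**}(u)=1$ and subtract), so the constant in step~(2) is simply $\Vert u\Vert$, not something more complicated. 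Second, for the iteration in step~(3) to converge you need to show the defect of the corrected map is controlled \emph{quadratically}, i.e.\ $\Vert\pi^\vee\Vert \le C\Vert L^\vee\Vert^2$ with $C$ depending only on $\mu$ and $\Vert u\Vert$; a merely linear bound $\Vert\pi^\vee\Vert \le C\Vert L^\vee\Vert$ would not suffice unless $C<1$, which you cannot guarantee a priori. Tracking these constants through the iteration is exactly what produces the explicit $\delta$ in~(\ref{b3eq}), which the present paper relies on essentially in the proof of Theorem~\ref{T:jr1.5}.
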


The important point is that $\delta$ is explicit, actually one can choose
\begin{equation}\label{b3eq}\delta=\frac{\varepsilon}{4 \Vert u \Vert + 8\mu^2\Vert u \Vert^2},\end{equation}
where $u$ is a virtual diagonal for $\A$ (see the proof of Theorem 3.1 in \cite{J}).

\section{Amenability and neighboring representations}

The question whether neighboring representations are necessarily equivalent was already posed in \cite{KK}. Here we prove that two representations of an amenable operator algebra which are close enough are necessarily similar. When one of the representation is the identity representation, the Proposition below can be considered as parameterized version of Theorem 5.4 in \cite{C1}. Indeed, according to the work of U. Haagerup \cite{H} Theorem 3.1, we know that a nuclear $C^*$-algebra is amenable (as a Banach algebra) and more precisely, admits a virtual diagonal of norm one. Therefore in the $C^*$-case, the quantity $\Vert u \Vert ^{-1}  \max \{ \Vert \pi_1  \Vert ^{-1}, \Vert \pi_2  \Vert ^{-1} \}$ considered below, equals one.\\
The proof of the following Proposition is inspired from the averaging technique appearing in the proof of Lemma 3.4 of \cite{C5} (equality (\ref{eq}) in the following proof can be compared with equation (3.21) in \cite{C5}).\\
If $S$ is an invertible operator, we denote $\ad_S$ the similarity implemented by $S$.
\begin{proposition}\label{P:rep} Let $\A$ be a unital amenable operator algebra with virtual diagonal $u \in (\A \widehat{\otimes} \A)^{**}$. Let $\pi_1, \pi_2$ be two unital bounded representations on the same Hilbert space $K$.\\
If $$\Vert \pi_1 - \pi_2 \Vert<\Vert u \Vert ^{-1}  \max \{ \Vert \pi_1  \Vert ^{-1}, \Vert \pi_2  \Vert ^{-1} \},$$ then there exists an invertible operator $S$ in the $w^*$-closed algebra generated by $\pi_1(\A) \cup \pi_2(\A)$ such that $\pi_1=\ad_S \circ \pi_2$. Moreover, $$\Vert S-I_K \Vert \leq  \Vert u \Vert \Vert \pi_1 - \pi_2 \Vert \min \{ \Vert \pi_1  \Vert , \Vert \pi_2  \Vert  \},$$ where $I_K$ denotes the identity of $\mathbb{B}(K)$.
\end{proposition}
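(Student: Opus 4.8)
The plan is to use the virtual diagonal $u$ to average the "intertwiner defect" between $\pi_1$ and $\pi_2$ into an honest intertwiner, imitating Christensen's averaging but in the non-selfadjoint, bounded (not $*$-) setting. Concretely, write $u$ as a weak-$*$ limit of finite sums $\sum_i a_i\otimes b_i \in \A\widehat\otimes\A$ and consider the element
\[
S \;=\; \text{``}\sum_i \pi_1(a_i)\,\pi_2(b_i)\text{''} \;=\; (\pi_1\otimes\pi_2)^{**}(u) \in \bB(K),
\]
interpreted properly: the bilinear map $(x,y)\mapsto \pi_1(x)\pi_2(y)$ from $\A\times\A$ to $\bB(K)$ induces a bounded linear map $\A\widehat\otimes\A \to \bB(K)$ of norm $\le \|\pi_1\|\|\pi_2\|$, and since $\bB(K)$ is a dual space we can apply its bidual at $u$ to land back in $\bB(K)$; the resulting $S$ lies in the weak-$*$ closed algebra generated by $\pi_1(\A)\cup\pi_2(\A)$. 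The normalization $\m_\A^{**}(u)=1$ will give $S = I_K + (\pi_1\otimes\pi_2 - \pi_1\otimes\pi_2\circ\text{(collapse)})^{**}(u)$; more precisely I would show $\|S - I_K\| \le \|u\|\,\|\pi_1-\pi_2\|\,\min\{\|\pi_1\|,\|\pi_2\|\}$ by factoring the bilinear map $(x,y)\mapsto \pi_1(x)\pi_2(y) - \pi_1(xy)$ through $(x,y)\mapsto \pi_1(x)\bigl(\pi_2(y)-\pi_1(y)\bigr)$ (or the symmetric version using $\pi_2$), each factor having norm at most $\|\pi_i\|\cdot\|\pi_1-\pi_2\|$, and applying $u$. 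Under the hypothesis $\|\pi_1-\pi_2\| < \|u\|^{-1}\max\{\|\pi_1\|^{-1},\|\pi_2\|^{-1}\}$ this yields $\|S-I_K\|<1$, so $S$ is invertible with inverse in the same weak-$*$ closed algebra.

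**The intertwining property** is where the bimodule condition $a\cdot u = u\cdot a$ enters. For fixed $a\in\A$, consider the two bounded bilinear maps $(x,y)\mapsto \pi_1(ax)\pi_2(y)$ and $(x,y)\mapsto \pi_1(x)\pi_2(ya)$; these correspond respectively to the actions of $a\cdot u$ and $u\cdot a$ under the induced linear maps, so applying the biduals and using $a\cdot u = u\cdot a$ gives
\[
\pi_1(a)\,S \;=\; (\pi_1\otimes\pi_2)^{**}(a\cdot u) \;=\; (\pi_1\otimes\pi_2)^{**}(u\cdot a) \;=\; S\,\pi_2(a).
\]
Here one must be a little careful that right multiplication by the fixed operator $\pi_2(a)$ and left multiplication by $\pi_1(a)$ on $\bB(K)$ are weak-$*$ continuous, so they commute with the bidual evaluation at $u$ — this is standard. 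Hence $\pi_1(a)S = S\pi_2(a)$ for all $a$, and since $S$ is invertible, $\pi_1 = \ad_S\circ\pi_2$.

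**The main obstacle** I anticipate is the careful bookkeeping around the bidual: making precise the identity $\m_\A^{**}(u)=1 \Rightarrow$ the "constant term" of $S$ is $I_K$, and verifying that the relevant one-sided multiplication operators on $\bB(K)$ are weak-$*$ continuous so that they pass through the second adjoint applied to $u$. A clean way to organize this is to work with the universal property: given a dual Banach algebra $\B=(\B_*)^*$ and a bounded homomorphism-pair, any bounded bilinear $\phi:\A\times\A\to\B$ that is separately weak-$*$ to weak-$*$ continuous in the appropriate variable composes with $u\in(\A\widehat\otimes\A)^{**}$ to give a well-defined element of $\B$, and the algebraic identities $(a\cdot u)$, $(u\cdot a)$, $\m^{**}(u)$ transfer verbatim. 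Once this framework is set up the rest is the two short computations above plus the Neumann-series argument for invertibility and the norm estimate; none of that is hard, so the entire weight of the proof is in setting up the bidual calculus correctly and choosing, at the norm-estimate step, whichever of $\pi_1,\pi_2$ has smaller norm to get the stated $\min$.
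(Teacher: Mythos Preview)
Your proposal is correct and follows essentially the same approach as the paper: the paper defines $S=\Psi_{\pi_1,\pi_2}(u)$ where $\Psi_{\pi_1,\pi_2}=i^*\circ\psi_{\pi_1,\pi_2}^{**}$ with $\psi_{\pi_1,\pi_2}(x\otimes y)=\pi_1(x)\pi_2(y)$ and $i:S^1(K)\hookrightarrow\bB(K)^*$, verifies the intertwining via $a\cdot u=u\cdot a$ using a net approximation of $u$, and obtains the norm estimate from $\Psi_{\pi_1,\pi_1}(u)=I_K$ (via $\m_\A^{**}(u)=1$) together with $\|\Psi_{\pi_1,\pi_2-\pi_1}(u)\|\le\|u\|\|\pi_1\|\|\pi_2-\pi_1\|$. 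Your bidual bookkeeping, the factorization $\pi_1(x)\pi_2(y)-\pi_1(xy)=\pi_1(x)(\pi_2(y)-\pi_1(y))$, and the symmetric choice to produce the $\min$ match the paper's argument exactly.
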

\begin{proof} We will use the following notation: for bounded linear maps $F,G:\A \to \mathbb{B}(K)$, we denote $\psi_{F,G}:\A \widehat{\otimes} \A \to \mathbb{B}(K)$  the linear map uniquely defined by $$\psi_{F,G}(x \otimes y)=F(x)G(y).$$ Hence $\psi_{F,G}$ is bounded and $\Vert \psi_{F,G} \Vert \leq \Vert F \Vert \Vert G \Vert$. Further, define $\Psi_{F,G}:(\A \widehat{\otimes} \A)^{**} \to \mathbb{B}(K)$ to be the unique $w^*$-continuous bounded extension of $\psi_{F,G}$. That is, $$\Psi_{F,G}=i^* \circ \psi_{F,G}^{**},$$ where $i:S^1(K) \hookrightarrow \mathbb{B}(K)^*$ denotes the canonical injection from the predual of $\mathbb{B}(K)$ inside its dual. Thus, $\Vert \Psi_{F,G} \Vert \leq \Vert F \Vert \Vert G \Vert$ as well.\\
 Let $\pi_1,\pi_2$ be as above. Define $$S=\Psi_{\pi_1,\pi_2}(u) \in \mathbb{B}(K).$$
  As $u \in (\A \widehat{\otimes} \A)^{**}$, there is a net $(u_t)_t$ in $\A \otimes \A$ converging to $u$ in the $w^*$-topology of $(\A \widehat{\otimes} \A)^{**}$. For any $t$, there are finite families $(a_k^t)_k,(b_k^t)_k$ of elements in $\A$ such that $$u_t=\sum_k a_k^t \otimes b_k^t.$$ Since $\Psi_{\pi_1,\pi_2}$ is $w^*$-continuous,
  \begin{equation}\label{eq}S=w^*\li _t \Psi_{\pi_1,\pi_2}(u_t)
  =w^*\li _t \sum_k \pi_1(a_k^t)\pi_2(b_k^t),
  \end{equation}
 which shows that $S$ lies in the $w^*$-closed algebra generated by $\pi_1(\A) \cup \pi_2(\A)$.\\
 Now let's prove that $S$ intertwines $\pi_1$ and $\pi_2$. Fix $a \in \A$, then
 \begin{align*}\pi_{1}(a)S=& \pi_{1}(a).w^*\li _t \sum_k \pi_1(a_k^t)\pi_2(b_k^t)\\
=&w^*\li _t \sum_k \pi_1(aa_k^t)\pi_2(b_k^t) \\
 = & w^*\li _t \Psi_{\pi_1,\pi_2}(a\cdot u_t)\\
 = & \Psi_{\pi_1,\pi_2}(a\cdot u).
 \end{align*}
 Analogously, we can show that $S \pi_2(a)=\Psi_{\pi_1,\pi_2}(u \cdot a)$. But $u$ is a virtual diagonal for $\A$, so $a\cdot u=u \cdot a$, hence $$\pi_{1}(a)S=S \pi_2(a).$$
To finish the proof, we just need to prove that $S$ is invertible. Without loss of generality we can assume that $\max \{ \Vert \pi_1  \Vert ^{-1}, \Vert \pi_2  \Vert ^{-1} \}= \Vert \pi_1  \Vert ^{-1}$, so  $\Vert \pi_1 - \pi_2 \Vert<\Vert u \Vert ^{-1} \Vert \pi_1  \Vert ^{-1}$.
 As above, one can check that $$\Psi_{\pi_1,\pi_1}(u)= w^*\li _t \sum_k \pi_1(a_k^t)\pi_1(b_k^t).$$ Then using the condition (2) defining a virtual diagonal, we obtain
\begin{align*}\Psi_{\pi_1,\pi_1}(u)=& w^*\li _t \sum_k \pi_1(a_k^tb_k^t)\\
=&\pi_1^{**}(w^*\li _t \sum_k a_k^tb_k^t) \\
 = & \pi_1^{**}(\m_\A ^{**}(u))\\
 =& \pi_1(1)\\
 =&I_K.\end{align*}
Consequently,
\begin{align*}\Vert S - I_K \Vert=& \Vert \Psi_{\pi_1,\pi_2}(u)- \Psi_{\pi_1,\pi_1}(u) \Vert\\
=&\Vert \Psi_{\pi_1,\pi_2-\pi_1}(u) \Vert \\
 \leq & \Vert u \Vert \Vert \pi_1  \Vert \Vert \pi_2 - \pi_1 \Vert \\
  < & 1,\end{align*}
 which shows that $S$ is invertible.
\end{proof}
\begin{remark}
This proposition can be compared with Theorem 5.1 in \cite{J1} or Theorem 2 in \cite{RT}. Here, our advantage is that the bound controlling the distance between the two homomorphisms is explicit, whereas the proofs of Theorem 5.1 in \cite{J1} or Theorem 2 in \cite{RT} use the open mapping Theorem to obtain this bound.
\end{remark}

\section{Proof of the main Theorem}

We recall the notion of near inclusion. Let $\gamma>0$, $\E,\F \subset \mathbb{B}(H)$ be two subspaces. We write $\E \subseteq^\gamma \F$  if for any $x$ in the unit ball of $\E$, there exists $y$ in $\F$ such that $$\Vert x-y \Vert \leq \gamma.$$
In this case, $\E$ is said to be \textit{nearly contained in $\F$ with constant} $\gamma$.\\
In the following Theorem, we denote by $\digamma$ the polynomial function
$$\digamma(x,y)=(1+ x)(y+ 4(x+2)y^2+ 8(x+2)x^2 y^{3}).$$

\begin{theorem}\label{T:jr1.5} Let $\A,\N \subset \mathbb{B}(H)$ be two unital operator subalgebras. Suppose that $\A$ is amenable (with a virtual diagonal $u$). Assume that $\N$ is $w^*$-closed and there is a bounded projection $P$ from $\mathbb{B}(H)$ onto $\N$.\\
If $\A \subseteq^{\gamma} \N$, with \begin{equation}\label{a0eq}\gamma < \frac{1}{\digamma(\Vert P \Vert,\Vert u \Vert)},\end{equation} then there exists an invertible operator $S$ in the $w^*$-closed algebra generated by $\A \cup \N$ such that $S\A S^{-1} \subset \N$. Moreover, $$\Vert S-I_H\Vert \leq  \digamma(\Vert P \Vert,\Vert u \Vert) \gamma.$$
\end{theorem}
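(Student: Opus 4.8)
The plan is to build a bounded unital linear map $L \colon \A \to \N$ that is close to the inclusion map $\A \hookrightarrow \mathbb{B}(H)$, apply Johnson's Theorem \ref{T:j} to straighten $L$ into an honest homomorphism $\pi \colon \A \to \N$, and then apply Proposition \ref{P:rep} to the pair $\pi_1 = \iota$ (the identity representation of $\A$ on $H$) and $\pi_2 = \pi$, viewed as representations on the same space $H$, to produce the similarity $S$.

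\emph{Step 1: constructing $L$.} Given $\A \subseteq^\gamma \N$, for each $x$ in the unit ball of $\A$ choose $y \in \N$ with $\Vert x - y \Vert \le \gamma$; a standard selection argument (or a direct use of $P$) lets us assemble these into a \emph{linear} map. The cleanest choice is simply $L = P|_\A$. Since $P$ is a projection onto $\N$ and $\A \subseteq^\gamma \N$, for $x$ in the unit ball of $\A$ pick $y \in \N$ with $\Vert x-y\Vert \le \gamma$; then $\Vert L(x) - x \Vert = \Vert P(x-y) - (x-y)\Vert \le (\Vert P \Vert + 1)\gamma$. Hence $\Vert L - \iota\Vert \le (1+\Vert P\Vert)\gamma$, so $\Vert L \Vert \le 1 + (1+\Vert P\Vert)\gamma$ (so we may take $\mu = 1 + \Vert P \Vert$ as a crude bound, or something sharper). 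Note $L$ is unital because $1 \in \N$ and $P$ fixes $\N$. The defect of multiplicativity is estimated from $L^\vee(x,y) = P(xy) - P(x)P(y)$: writing $P(x) = x + e_x$, $P(y) = y + e_y$ with $\Vert e_x\Vert, \Vert e_y\Vert \le (1+\Vert P\Vert)\gamma$ on unit vectors and using $P(xy) = xy + e_{xy}$, one gets a bound of the form $\Vert L^\vee \Vert \le c(\Vert P\Vert)\gamma$ for an explicit constant $c$ (this is where the $4(x+2)y^2$-type terms in $\digamma$ begin to appear, once the virtual-diagonal norm enters via \eqref{b3eq}).

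\emph{Step 2: applying Johnson's Theorem.} With $\varepsilon$ to be chosen, the hypothesis \eqref{a0eq} on $\gamma$ is precisely what guarantees that $\Vert L^\vee\Vert \le \delta$, where $\delta$ is given by \eqref{b3eq} with the above $\mu$. Theorem \ref{T:j} then yields a unital bounded homomorphism $\pi \colon \A \to \N$ with $\Vert L - \pi\Vert \le \varepsilon$, hence $\Vert \pi - \iota\Vert \le \varepsilon + (1+\Vert P\Vert)\gamma$. One must bookkeep carefully so that this total distance is $< \Vert u\Vert^{-1}\max\{\Vert \iota\Vert^{-1}, \Vert \pi\Vert^{-1}\} = \Vert u\Vert^{-1}\max\{1, \Vert\pi\Vert^{-1}\}$; since $\Vert \pi\Vert$ can be bounded above by something like $\mu + \varepsilon$, this reduces to an inequality of the form $\varepsilon + (1+\Vert P\Vert)\gamma < \Vert u\Vert^{-1}(1+\Vert P\Vert+\varepsilon)^{-1}$, which \eqref{a0eq} is engineered to ensure after an appropriate choice of $\varepsilon$ as a function of $\gamma$ (e.g. $\varepsilon$ comparable to $\gamma$).

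\emph{Step 3: applying Proposition \ref{P:rep}.} Take $\pi_1 = \iota$ and $\pi_2 = \pi$, both unital bounded representations of $\A$ on $H$. The inequality just arranged is exactly the hypothesis of Proposition \ref{P:rep}, so there is an invertible $S$ in the $w^*$-closed algebra generated by $\iota(\A) \cup \pi(\A) \subseteq \A \cup \N$ with $\iota = \ad_S \circ \pi$, i.e. $a = S\pi(a)S^{-1}$ for all $a \in \A$, equivalently $S^{-1}\A S = \pi(\A) \subseteq \N$; replacing $S$ by $S^{-1}$ gives $S\A S^{-1}\subseteq \N$. The norm estimate from Proposition \ref{P:rep} gives $\Vert S - I_H\Vert \le \Vert u\Vert \cdot \Vert \iota - \pi\Vert \cdot \min\{1,\Vert\pi\Vert\} \le \Vert u\Vert\bigl(\varepsilon + (1+\Vert P\Vert)\gamma\bigr)$, and substituting the chosen $\varepsilon$ and collecting terms produces a bound of the form $\digamma(\Vert P\Vert, \Vert u\Vert)\gamma$; the polynomial $\digamma$ is just the result of tracking the constants through Steps 1--3 and through \eqref{b3eq}.

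\emph{Main obstacle.} The conceptual steps are clean; the real work is the bookkeeping of constants. One must (i) get an honest explicit bound on $\Vert L^\vee\Vert$ in terms of $\Vert P\Vert$ and $\gamma$, being careful that cross-terms like $P(x)P(y)$ versus $P(xy)$ are controlled using only near-containment and boundedness of $P$ (no multiplicativity of $P$ is available, since $\N$ need not be the range of a conditional expectation in any algebraic sense beyond being a subalgebra); (ii) choose $\varepsilon = \varepsilon(\gamma)$ so that the single scalar inequality feeding Proposition \ref{P:rep} holds under \eqref{a0eq} and simultaneously so that the final bound collapses to exactly $\digamma(\Vert P\Vert,\Vert u\Vert)\gamma$; and (iii) verify that $\Vert\pi\Vert$ (which enters both the threshold in Proposition \ref{P:rep} and the final estimate) is controlled, which follows from $\Vert\pi\Vert \le \Vert L\Vert + \varepsilon$. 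No new idea is needed beyond what is in Sections 2 and 3, but the polynomial $\digamma$ essentially encodes the worst-case propagation of these three estimates, with the cubic term coming from the $\mu^2\Vert u\Vert^2$ in the denominator of \eqref{b3eq} multiplied through.
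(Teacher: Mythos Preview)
Your proposal is essentially the paper's proof: restrict $P$ to $\A$, bound the multiplicative defect, apply Theorem~\ref{T:j}, then Proposition~\ref{P:rep}. Two bookkeeping points where the paper is cleaner than your sketch: (i) since $\id_\A$ is unital with $\Vert\id_\A\Vert=1$, the threshold in Proposition~\ref{P:rep} is simply $\Vert u\Vert^{-1}$ and $\min\{1,\Vert\pi\Vert\}=1$, so no control on $\Vert\pi\Vert$ is needed anywhere---your displayed inequality with $(1+\Vert P\Vert+\varepsilon)^{-1}$ on the right is spurious; (ii) apply Proposition~\ref{P:rep} so that $\ad_S\circ\id_\A=\pi$ (i.e.\ swap your $\pi_1,\pi_2$), which gives $S\A S^{-1}=\pi(\A)\subseteq\N$ directly---your replacement $S\mapsto S^{-1}$ would spoil the exact bound on $\Vert S-I_H\Vert$. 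With $\mu=\Vert P\Vert$ (not $1+\Vert P\Vert$, since $L=P|_\A$), the explicit defect bound $\Vert L^\vee\Vert\le(2+\Vert P\Vert)(1+\Vert P\Vert)\gamma$ together with $\varepsilon=\delta(4\Vert u\Vert+8\Vert P\Vert^2\Vert u\Vert^2)$ yields the identity $\varepsilon+(1+\Vert P\Vert)\gamma=\digamma(\Vert P\Vert,\Vert u\Vert)\gamma/\Vert u\Vert$, from which both the hypothesis check and the final estimate drop out exactly.
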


\begin{proof}
Denote $T=P_{\vert \A}:\A \to \N$. Let $x$ be in the unit ball of $\A$, then there is $z$ in $\N$ such that $\Vert x - z \Vert \leq \gamma$. Then
\begin{align*}\Vert T(x) - x \Vert &=\Vert T(x-z) - (x-z) \Vert \\
&\leq (1+\Vert P \Vert)\gamma,\end{align*}
which means that \begin{equation}\label{a1eq}\Vert T-\id_\A \Vert \leq (1+\Vert P \Vert)\gamma\end{equation}
(where $\id_\A:\A \hookrightarrow \mathbb{B}(H)$ denotes the identity representation). Let's compute the defect of multiplicativity of $T$ (see equation (\ref{aeq})). Let $x,y$ be in the unit ball of $\A$

\begin{align*}
\Vert T^\vee(x,y) \Vert&=\Vert T(xy)-T(x)T(y) \Vert \\
&\leq \Vert T(xy)-xy \Vert+\Vert xy-xT(y) \Vert+ \Vert xT(y)- T(x)T(y) \Vert\\
&\leq \Vert T-\id_\A \Vert +\Vert T-\id_\A \Vert + \Vert T-\id_\A \Vert \Vert T \Vert\\
&\leq (2+\Vert P \Vert)\Vert T-\id_\A \Vert.
\end{align*}
Hence from equation (\ref{a1eq}), we obtain \begin{equation}\label{a2eq}\Vert T^\vee \Vert \leq (2+\Vert P \Vert)(1+\Vert P \Vert)\gamma.\end{equation}
We want to apply Theorem \ref{T:j} to the linear map $T$ (valued in the dual operator algebra $\N$). Let $\mu=\Vert P \Vert$, $$\delta=(2+\Vert P \Vert)(1+\Vert P \Vert)\gamma$$ and (see equation (\ref{b3eq}))
\begin{align*}
\varepsilon&=\delta(4 \Vert u \Vert + 8\mu^2\Vert u \Vert^2) \\
&=(1+ \Vert P \Vert)(4(\Vert P \Vert+2)\Vert u \Vert+ 8(\Vert P \Vert+2)\Vert P \Vert^2 \Vert u \Vert^{2})\gamma.
\end{align*}
Note first that
\begin{equation}\label{eq1} \varepsilon + (1+\Vert P \Vert)\gamma = \frac{\digamma(\Vert P \Vert,\Vert u \Vert) \gamma}{\Vert u \Vert},\end{equation}
so from the condition $(\ref{a0eq})$ on $\gamma$, we have \begin{equation}\label{a3eq}\varepsilon + (1+\Vert P \Vert)\gamma < \Vert u \Vert^{-1}.\end{equation}
Further, as the norm of a virtual diagonal is always greater or equal to one (from condition (2) defining a virtual diagonal, see Preliminaries Section), $\varepsilon$ is strictly smaller than one.
Consequently, from equations (\ref{b3eq}) and (\ref{a2eq}), we can apply Theorem \ref{T:j} to the linear map $T$. Thus there exists a unital bounded algebra homomorphism $\pi:\A \to \N$ such that $\Vert T - \pi \Vert \leq \varepsilon$.\\
Now we want to apply Proposition \ref{P:rep} to $\pi$ and $\id_\A$. Clearly, $\max \{ \Vert \pi  \Vert ^{-1}, \Vert \id_\A  \Vert ^{-1} \}=1.$
Moreover, from equation (\ref{a1eq}), we have
\begin{align*}
\Vert \pi-\id_\A \Vert & \leq \Vert \pi-T \Vert +\Vert T-\id_\A \Vert\\
& \leq \varepsilon + (1+\Vert P \Vert)\gamma
\end{align*}
  and this quantity is strictly smaller than $\Vert u \Vert^{-1}$ from equation (\ref{a3eq}) above. Applying Proposition \ref{P:rep}, we obtain a similarity $S$ such that $\ad_S \circ \id_\A= \pi$ and
 \begin{align*}
\Vert S - I_H \Vert & \leq \Vert u \Vert \Vert \pi-\id_\A \Vert \\
& \leq \Vert u \Vert (\varepsilon + (1+\Vert P \Vert)\gamma)
\end{align*}
and this quantity is smaller than $\digamma(\Vert P \Vert,\Vert u \Vert) \gamma$ by equation (\ref{eq1}).
\end{proof}

\begin{remark}
Since the norm of a virtual diagonal and the norm of a projection are always greater or equal to one, necessarily $\gamma < 1/74$, in the preceding Theorem.
\end{remark}

From \cite{H}, we know that a nuclear $C^*$-algebra admits a virtual diagonal of norm one. Combining this result with Lemma 2.7 in \cite{C0}, we obtain the following Corollary (here, $\digamma_P=\digamma(\Vert P \Vert,1)$):

\begin{corollary} Let $\A,\N \subset \mathbb{B}(H)$ be two unital subalgebras. Suppose that $\A$ is a nuclear $C^*$-algebra. Assume that $\N$ is a von Neumann subalgebra which is the range of bounded projection $P$.\\
If $\A \subseteq^{\gamma} \N$, with $$\gamma < \frac{1}{\digamma_P},$$ then there exists a unitary $U$ in the von Neumann subalgebra generated by $\A \cup \N$ such that $U\A U^{*} \subset \N$. Moreover, $$\Vert U-I_H\Vert \leq  \frac{\sqrt{2} \digamma_P }{\sqrt{1+\sqrt{1-\digamma_P^2\gamma^2}}}\gamma.$$
\end{corollary}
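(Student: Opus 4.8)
The plan is to deduce the Corollary from Theorem \ref{T:jr1.5} plus a selfadjointness upgrade. First I would observe that a nuclear $C^*$-algebra is amenable and, by \cite{H}, carries a virtual diagonal $u$ with $\Vert u\Vert=1$; a von Neumann subalgebra $\N\subset\mathbb{B}(H)$ that is the range of a bounded projection $P$ is in particular $w^*$-closed, so the hypotheses of Theorem \ref{T:jr1.5} are met with $\Vert u\Vert$ replaced by $1$, i.e. with $\digamma(\Vert P\Vert,\Vert u\Vert)=\digamma(\Vert P\Vert,1)=\digamma_P$. Hence under $\gamma<1/\digamma_P$ there is an invertible $S$ in the $w^*$-closed algebra $\M$ generated by $\A\cup\N$ with $S\A S^{-1}\subset\N$ and $\Vert S-I_H\Vert\le\digamma_P\,\gamma<1$.

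The next step is to replace the similarity $S$ by a unitary. This is exactly the content of Lemma 2.7 in \cite{C0}: if $S$ is an invertible element of a von Neumann algebra $\M$ with $S\A S^{-1}$ selfadjoint (here it is a $C^*$-subalgebra of $\N$), then writing the polar decomposition $S=U|S|$ one checks that $U$ already conjugates $\A$ into $\N$, because $|S|=(S^*S)^{1/2}$ lies in the relative commutant of $S\A S^{-1}$ — concretely, $S^*S$ commutes with the selfadjoint algebra $S\A S^{-1}$, so $|S|$ does too, and $U\A U^* = |S|^{-1}S\A S^{-1}|S| = S\A S^{-1}\subset\N$. Moreover $U$ is a unitary lying in $\M$ (both $S$ and $|S|^{-1}$ do), so $U$ belongs to the von Neumann subalgebra generated by $\A\cup\N$, as claimed. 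Here one uses that $\A$ is a $C^*$-algebra so that $S\A S^{-1}$, being a $*$-isomorphic image under $\ad_S$... well, $\ad_S$ need not be a $*$-map, but $S\A S^{-1}\subset\N$ and we additionally know it equals $\pi(\A)$ for a homomorphism $\pi$; the selfadjointness we actually need is that $\ad_S\circ\id_\A$ is a bounded homomorphism of a $C^*$-algebra into a von Neumann algebra, and a standard argument (Lemma 2.7 in \cite{C0}) produces a $*$-homomorphism into the same von Neumann algebra implemented by the unitary part. This is the step I expect to require the most care to state precisely, since it is where the $C^*$/von Neumann structure, absent in Theorem \ref{T:jr1.5}, is genuinely used.

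Finally I would pin down the norm estimate on $U$. From $\Vert S-I_H\Vert\le\digamma_P\,\gamma=:r<1$ one has $\Vert|S|-I_H\Vert\le r$ as well (since $|S|=(S^*S)^{1/2}$ and the square-root function is operator-monotone / $1$-Lipschitz on the relevant spectral region, or more simply $\Vert S^*S-I\Vert\le 2r+r^2$ and then the elementary bound $\Vert (S^*S)^{1/2}-I\Vert\le \Vert S^*S-I\Vert/(1+\sqrt{1-(2r+r^2)})$ after sharpening). Then $\Vert U-I_H\Vert=\Vert S|S|^{-1}-I_H\Vert$ and a direct computation with $U-I=(S-|S|)|S|^{-1}$ together with the above yields the stated bound
$$\Vert U-I_H\Vert\le\frac{\sqrt{2}\,\digamma_P}{\sqrt{1+\sqrt{1-\digamma_P^2\gamma^2}}}\,\gamma;$$
the cleanest route is to compute $\Vert U-I_H\Vert^2$ using $U^*U=I$, namely $\Vert U-I\Vert^2=\Vert 2I-U-U^*\Vert=2\Vert I-\re U\Vert$, and bound $\re U$ below via $\re S=|S|\re U\ge\cdots$, reducing everything to the scalar optimization that produces the factor $\sqrt{2}/\sqrt{1+\sqrt{1-r^2}}$ with $r=\digamma_P\gamma$. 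The main obstacle is thus not conceptual but bookkeeping: verifying that the polar-decomposition argument stays inside the von Neumann algebra generated by $\A\cup\N$ and that the scalar estimate gives exactly the quoted constant rather than a weaker one.
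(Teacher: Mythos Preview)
Your plan is exactly the paper's: it deduces the Corollary by invoking Haagerup \cite{H} to take $\Vert u\Vert=1$ in Theorem~\ref{T:jr1.5} (so $\digamma(\Vert P\Vert,\Vert u\Vert)=\digamma_P$) and then citing Lemma~2.7 in \cite{C0} to pass from the invertible $S$ to the unitary $U$ with the stated estimate. The paper gives no further details, so your expanded sketch---including the caveat that the passage from $S$ to $U$ while preserving $U\A U^*\subset\N$ is the one point needing care in the selfadjoint setting---is already more explicit than what appears there.
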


\begin{remark}Since we just assume $P$ bounded (not necessarily contractive), this previous Corollary might improve slightly Corollary 4.2 (a) in \cite{C2}.
However, it is an open problem whether a von Neumann algebra which is the range of a bounded projection is necessarily injective (see \cite{Pis}).
\end{remark}

\textbf{Acknowledgements.} The author would like to thank Stuart White for introducing him to perturbation theory of operator algebras. The author is grateful to Narutaka Ozawa for hosting him at the University of Tokyo.


\email{Jean Roydor, Department of Mathematical Sciences, Tokyo, 153-8914, Japan.\\
roydor@ms.u-tokyo.ac.jp}


\begin{thebibliography}{99}


\bibitem{BLM} D.P. Blecher, C. Le Merdy, \textit{Operator algebras and their modules---an operator space approach}. London Mathematical Society Monographs. New Series, 30. Oxford Science Publications.
\bibitem{C0} E. Christensen, \textit{Perturbations of type I von Neumann algebras}. J. London Math. Soc. (2)  9  (1974/75), 395--405.
\bibitem{C1} E. Christensen, \textit{Perturbations of operator algebras}. Invent. Math. 43  (1977), no. 1, 1--13.
\bibitem{C2} E. Christensen, \textit{Near inclusions of $C^{\ast}$-algebras}. Acta Math. 144 (1980), no. 3-4, 249--265.
\bibitem{C4} E. Christensen, A.M. Sinclair, R.R. Smith, S.A. White, W. Winter, \textit{The spatial isomorphism problem for close separable nuclear $C^*$-algebras}. Proc. Natl. Acad. Sci. USA 107 (2010), no. 2, 587--591.
\bibitem{C5} E. Christensen, A.M. Sinclair, R.R. Smith, S.A. White, W. Winter, \textit{Perturbations of nuclear $C^*$-algebras}. Acta Mathematica, to appear. Preprint Arxiv math.OA/0910.4953v1.
\bibitem{ER1} E.G. Effros, Z.J. Ruan, \textit{Operator spaces}. London Mathematical Society Monographs. New Series, 23. The Clarendon Press, Oxford University Press, New York, 2000.
\bibitem{H} U. Haagerup, \textit{All nuclear $C^*$-algebras are amenable}. Invent. Math. 74  (1983), 305--319.
\bibitem{J0} B.E. Johnson, \textit{Cohomology in Banach algebras}. Mem. Amer. Math. Soc. 127 (1972), 1--96.
\bibitem{J1} B.E. Johnson, \textit{Perturbations of Banach algebras}. Proc. London Math. Soc. (3) 34 (1977), no. 3, 439--458.
\bibitem{J} B.E. Johnson, \textit{Approximately multiplicative maps between Banach algebras}. J. London Math. Soc. (2)  37  (1988),  no. 2, 294--316.
\bibitem{KK} R.V. Kadison, D. Kastler, \textit{Perturbations of von Neumann algebras. I. Stability of type}. Amer. J. Math. 94 (1972), 38--54.
\bibitem{K} D. Kazhdan, \textit{On $\varepsilon $-representations}.  Israel J. Math.  43  (1982), no. 4, 315--323.
\bibitem{Pa} V.I. Paulsen, \textit{Completely bounded maps and operator algebras}. Cambridge Studies in Advanced Mathematics, 78. Cambridge University Press, Cambridge, 2002.
\bibitem{Pis} G. Pisier, \textit{Remarks on complemented subspaces of von Neumann algebras}. Proc. Roy. Soc. Edinburgh Sect. A 121 (1992), no. 1-2, 1--4.
\bibitem{P} G. Pisier, \textit{Introduction to operator space theory}. London Mathematical Society Lecture Note Series, 294. Cambridge University Press, Cambridge, 2003.
\bibitem{RT} I. Raeburn, J.L. Taylor, \textit{Hochschild cohomology and perturbations of Banach algebras}. J. Functional Analysis 25 (1977), no. 3, 258--266.
\end{thebibliography}
\end{document}